\documentclass[pdflatex,sn-mathphys-num]{sn-jnl}


\usepackage{graphicx}%
\usepackage{multirow}%
\usepackage{amsmath,amssymb,amsfonts}%
\usepackage{amsthm}%
\usepackage{mathrsfs}%
\usepackage[title]{appendix}%
\usepackage{xcolor}%
\usepackage{textcomp}%
\usepackage{manyfoot}%
\usepackage{booktabs}%
\usepackage{algorithm}%
\usepackage{algorithmicx}%
\usepackage{algpseudocode}%
\usepackage{listings}%
\usepackage{multirow}


\theoremstyle{thmstyleone}%
\newtheorem{theorem}{Theorem}
\newtheorem{proposition}[theorem]
{Proposition}%
\newtheorem{lemma}[theorem]
{Lemma}
\newtheorem{corollary}[theorem]
{Corollary}

\theoremstyle{thmstyletwo}%

\theoremstyle{thmstylethree}%

\raggedbottom

\begin{document}

\title[On the Wiener-like root-indices of graphs]{On the Wiener-like root-indices of graphs}


\author[1,7]{\fnm{Simon} \sur{Brezovnik}}\email{simon.brezovnik@fs.uni-lj.si}
\equalcont{These authors contributed equally to this work.}

\author[2,3,4,5]{\fnm{Matthias} \sur{Dehmer}}\email{matthias.dehmer@ffhs.ch}
\equalcont{These authors contributed equally to this work.}

\author*[6,7]{\fnm{Niko} \sur{Tratnik}}\email{niko.tratnik@um.si}
\equalcont{These authors contributed equally to this work.}

\author[6,8]{\fnm{Petra} \sur{Žigert Pleteršek}}\email{petra.zigert@um.si}
\equalcont{These authors contributed equally to this work.}

\affil[1]{\orgname{University of Ljubljana}, \orgdiv{Faculty of Mechanical Engineering}, \orgaddress{\city{Ljubljana}, \country{Slovenia}}}

\affil[2]{ \orgname{Swiss Distance University of Applied Sciences}, \orgaddress{\city{Brig}, \country{Switzerland}}}

\affil[3]{\orgdiv{Department for Biomedical Computer Science and Mechatronics}, \orgname{UMIT—Private University for Health Sciences}, \orgaddress{\city{Hall in Tyrol}, \country{Austria}}}

\affil[4]{\orgname{Nankai University}, \orgdiv{College of Artificial Intelligence}, \orgaddress{ \city{Tianjin}, \country{China}}}

\affil[5]{\orgname{Xian Technological University}, \orgdiv{School of Science}, \orgaddress{\street{Street}, \city{Xian}, \state{Shaanxi}, \country{China}}}

\affil[6]{\orgname{University of Maribor}, \orgdiv{Faculty of Natural Sciences and Mathematics}, \orgaddress{\city{Maribor}, \country{Slovenia}}}

\affil[7]{\orgname{Institute of Mathematics, Physics, and Mechanics}, \orgaddress{\city{Ljubljana}, \country{Slovenia}}}

\affil[8]{\orgname{University of Maribor}, \orgdiv{Faculty of Chemistry and Chemical Engineering}, \orgaddress{\city{Ljubljana}, \country{Slovenia}}}


\abstract{In this paper, we examine roots of graph polynomials where those roots can be considered as structural graph measures.
More precisely, we prove analytical results for the roots of certain modified graph polynomials and also discuss numerical results. As polynomials, we use, e.g.,
the Hosoya, the Schultz, and the Gutman polynomial which belong to an interesting family of degree-distance-based graph polynomials; they constitute so-called counting polynomials with non-negative integers  
as coefficients and the roots of their modified versions have been used to characterize the topology of graphs. Our results can be applied for the quantitative  characterization of graphs. Besides analytical results, we also investigate other properties of those measures such as their degeneracy which is an undesired aspect of graph measures.
It turns out that the measures representing roots of graph polynomials possess high discrimination power on exhaustively generated trees, which outperforms standard versions of these indices. Furthermore, a new measure is introduced that allows us to compare different topological indices in terms of structure sensitivity and abruptness.}

\keywords{(edge-)Hosoya polynomial, Schultz polynomial, Gutman polynomial, root-index, discrimination power,  sensitivity}



\maketitle

\section{Introduction}

{{In network theory and mathematical chemistry, topological indices \cite{devillers_book,todeschini_2002} have been
used to quantitatively  describe the structure of graphs.
The origin of this research field goes 75 years back when the famous Wiener index was introduced, see \cite{wiener}.
The Wiener index has been used in graph theory \cite{klavzar_1996} as well as in drug design and related
fields such as computational chemistry \cite{basak}.
Later, several other graph-based indices were invented, for example the edge-Wiener index \cite{dankelmann-2009,iran-2009,khalifeh-2009},
the Schultz index (also known as the degree-distance) \cite{dobrynin,schultz}, and the Gutman index \cite{gut_in}.
Another categorization of graph measures is given in \cite{EmmertStreib2011NetworksFS} that distinguishes between several
categories. In this paper, we consider graph measures which are based on graph polynomials.
Dehmer et al. \cite{dehmer_plos_2010} started this development by defining a special graph polynomial and
use the moduli of the zeros of the underlying graph polynomial as graph measures; it turned out that the measures have low
degeneracy. After this, Dehmer et al.  defined the so-called orbit polynomial where it's coefficients
are based on vertex orbit cardinalities and proved several properties and bounds thereof, see  \cite{dehmer, dehmer_emmert}.
In fact, the orbit polynomial has been used as an efficient symmetry measure for graphs, see \cite{dehmer,ghorbani_dehmer,Ma}.
Other contributions of Dehmer et al. who started the research on this topic can be found in
\cite{dehmer_moosbrugger_shi_2015}. Recent contributions to this field are due to Ghorbani et al. \cite{ghorbani1,ghorbani} and  Brezovnik et al. \cite{BDTZ23}. In the latter paper, measures based on roots of the Szeged and Mostar polynomials have been examined. 
}

{{Here we continue this line of research and therefore investigate some new  graph measures based on certain roots of graph polynomials.
These roots are positive, real-valued and lie in the interval $(0,1]$. It's evident that they have a certain structural interpretation
when measuring the complexity of graphs quantitatively. A well-known example is the unique, positive root $\delta \in (0,1]$ of the
modified orbit polynomial which has been proven useful as a symmetry measure.
In order to define and examine new root-based measures (so-called root-indices), we here focus on the Hosoya polynomial \cite{hosoya},
the edge-Hosoya polynomial \cite{behm}, the Schultz polynomial \cite{gut_pol}, and the Gutman polynomial \cite{gut_pol}.

In this paper, we firstly consider closed formulas for the mentioned polynomials of some well-known deterministic
graph classes. Then, bounds for the roots of the modified versions of considered polynomials are proved. Next, we compute various measures for evaluating the quality of different topological indices. In particular,
we consider the discrimination power of root-indices and compare their performance with that of existing
similar indices. Next, we consider correlations between different pairs of root-indices and also correlations
between them and their standard versions. This analysis reveals 
interesting links to other existing graph measures.
 Finally, we investigate structure sensitivity and abruptness which
measure how a gradual change of a graph results on the topological index. It is preferred that the first one is
as large as possible and the second as small as possible. Therefore, we introduce a new measure $SA$ as their
quotient, which enables us to efficiently compare different topological indices. 
}

\section{Preliminaries}
\label{prel}

Let  $G$ be a connected graph with at least one edge. Moreover, denote by $d_G(a,b)$  the standard shortest-path \textit{distance} between vertices $a, b \in V(G)$. In addition, let $\deg(a)$ be the \textit{degree} of  vertex $a$. The distance $d(e_1,e_2)$ between edges $e_1$ and $e_2$ of graph $G$ is  the distance between the corresponding vertices $e_1$ and $e_2$ in the line graph $L(G)$ of $G$.

\noindent
The \textit{Hosoya polynomial} of a graph $G$, denoted as $H(G,x)$, and the \textit{edge-Hosoya polynomial} of $G$, denoted as $H_e(G,x)$, are defined as
\begin{eqnarray*}
H(G,x) &=& \sum_{\substack{\{ a,b \} \subseteq V(G) \\ a \neq b}}  x^{d(a,b)},\\
H_e(G,x) &=& \sum_{\substack{\{ e,f \} \subseteq E(G) \\ e \neq f}} x^{d(e,f)}.\\
\end{eqnarray*}
Note that usually these polynomials include a nonzero constant term (the number of vertices in the Hosoya polynomial and the number of edges in the edge-Hosoya polynomial). However, the original definition of the Hosoya polynomial does not consider pairs of vertices $\{ a, b \}$ in which $a=b$. We use the latest definition since it is more suitable for our purposes.

It is clear that
$$H_e(G,x) = H(L(G),x),$$
where $L(G)$ is the line graph of $G$.

Moreover, the \textit{Schultz polynomial} of $G$, denoted as $Sc(G,x)$, and the \textit{Gutman polynomial} of $G$, denoted as $Gut(G,x)$, are defined as
\begin{eqnarray*}
Sc(G,x) &=& \sum_{\substack{\{ a,b \} \subseteq V(G) \\ a \neq b}} (\deg(a) + \deg(b)) x^{d(a,b)},\\
Gut(G,x) &=& \sum_{\substack{\{ a,b \} \subseteq V(G) \\ a \neq b}} \deg(a) \deg(b) x^{d(a,b)}.\\
\end{eqnarray*}

Obviously, the \textit{Wiener index}, the \textit{edge-Wiener index}, the \textit{Schultz index}, and the \textit{Gutman index} can be computed from the mentioned polynomials by evaluating their first derivatives at $x = 1$:
\begin{eqnarray*}
W(G) = H'(G,1), & & W_e(G) = H_e'(G,1),\\
 Sc(G) = Sc'(G,1), & & Gut(G) = Gut'(G,1).
\end{eqnarray*}

{ Finally, we show an alternative way for writing the polynomials defined above. For a graph $G$ and $k \geq 1$, we  define the following sets of unordered pairs of vertices and edges of $G$:
\begin{eqnarray*}
V_k(G) &=& \{ \{ a,b \} \subseteq V(G) \ | \ d(a,b) =k \},\\
E_k(G) &=& \{ \{ e,f \} \subseteq E(G) \ | \ d(e,f) =k \}.
\end{eqnarray*}

\noindent
In addition, for every $k \geq 1$ we set
\begin{eqnarray*}
d_k(G) & = & \left| V_k(G) \right|,\\
d^e_k(G) &=& \left| E_k(G) \right|, \\
s_k(G) & = & \sum_{ \{ a,b \} \in V_k(G)} \left( \deg(a) + \deg(b) \right),\\
g_k(G) & = & \sum_{ \{ a,b \} \in V_k(G)}  \deg(a) \deg(b).
\end{eqnarray*}
Note that if $V_k(G)$ is an empty set for some $k \geq 1$, then we define $s_k(G)=g_k(G)=0$.

It is straightforward to see that the following holds:
\begin{eqnarray*}
H(G,x) = \sum_{k \geq 1} d_k(G) x^k, & & H_e(G,x) = \sum_{k \geq 1} d^e_k(G) x^k,\\
 Sc(G,x) = \sum_{k \geq 1} s_k(G) x^k, & & Gut(G,x) = \sum_{k \geq 1} g_k(G) x^{k}.
\end{eqnarray*}

The following numbers will be  used in Section \ref{gl_se}:
\begin{eqnarray*}
MH(G) = \max \{ d_k(G) \ | \ k \geq 1\}, & & MH_e(G) = \max \{ d^e_k(G) \ | \ k \geq 1\},\\
MS(G) = \max \{ s_k(G) \ | \ k \geq 1\}, & & MG(G) = \max \{ g_k(G) \ | \ k \geq 1\}.
\end{eqnarray*}}

\section{Explicit formulas for some degree-distance-based polynomials}
\label{sec3}

Here we state the explicit formulas for the Hosoya polynomial, the edge-Hosoya polynomial, the Schultz polynomial, and the Gutman polynomial for some  families of graphs. It should be pointed out that some of the results  are quite straightforward and were already stated in earlier papers, for example see \cite{behm}.

\begin{proposition} \label{prop1}
For the complete graph $K_n$ with $n$ vertices, $n \geq 1$, it holds
\begin{eqnarray*}
H(K_n,x) & = & \frac{n(n-1)}{2} x, \\
Sc(K_n,x)&  = & n(n-1)^2x,\\
Gut(K_n,x) &=&\frac{n(n-1)^3}{2} x.
\end{eqnarray*}
\end{proposition}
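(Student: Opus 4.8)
The plan is to exploit the fact that $K_n$ has diameter $1$: every pair of distinct vertices is joined by an edge, so $d(a,b)=1$ for all $\{a,b\}\subseteq V(K_n)$ with $a\neq b$. In the notation of Section~\ref{prel} this means that $V_1(K_n)$ consists of all $\binom{n}{2}$ two-element subsets of $V(K_n)$, while $V_k(K_n)=\emptyset$ for every $k\geq 2$. Consequently each of the three polynomials collapses to a single monomial in $x$, and the whole task reduces to evaluating the corresponding coefficient $d_1$, $s_1$, or $g_1$.

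First I would compute $d_1(K_n)=|V_1(K_n)|=\binom{n}{2}=\tfrac{n(n-1)}{2}$. Substituting this into the identity $H(G,x)=\sum_{k\geq 1}d_k(G)x^k$ from Section~\ref{prel}, together with $d_k(K_n)=0$ for $k\geq 2$, immediately yields $H(K_n,x)=\tfrac{n(n-1)}{2}x$.

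Next I would use that $K_n$ is $(n-1)$-regular, so $\deg(a)=n-1$ for every vertex $a$. Hence for each pair $\{a,b\}\in V_1(K_n)$ one has $\deg(a)+\deg(b)=2(n-1)$ and $\deg(a)\deg(b)=(n-1)^2$. Summing these constant contributions over all $\binom{n}{2}$ pairs gives
$$s_1(K_n)=\frac{n(n-1)}{2}\cdot 2(n-1)=n(n-1)^2, \qquad g_1(K_n)=\frac{n(n-1)}{2}\cdot (n-1)^2=\frac{n(n-1)^3}{2},$$
and plugging these into $Sc(G,x)=\sum_{k\geq 1}s_k(G)x^k$ and $Gut(G,x)=\sum_{k\geq 1}g_k(G)x^k$ produces the two remaining formulas.

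There is essentially no hard step here; the computation is driven entirely by the two structural facts (diameter one and $(n-1)$-regularity). The only point worth a remark is the degenerate range at the low end: for $n=1$ there are no pairs of vertices, so all three polynomials are identically $0$, which is consistent with the closed forms since the factor $n(n-1)$ vanishes; and for $n=2$ the single edge gives $H(K_2,x)=x$, matching $\tfrac{2\cdot 1}{2}x$. Thus the formulas are valid throughout the stated range $n\geq 1$.
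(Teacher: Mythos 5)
Your proof is correct, and the paper in fact omits a proof of Proposition~\ref{prop1} entirely, treating it as one of the straightforward results; your argument (diameter one, so only $d_1$, $s_1$, $g_1$ are nonzero, combined with $(n-1)$-regularity) is exactly the intended computation. The remark on the degenerate cases $n=1$ and $n=2$ is a nice touch that the paper does not bother to make.
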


\begin{proposition} \label{prop2}
For the complete graph $K_n$ with $n$ vertices, $n \geq 1$, it holds
$$H_e(K_n,x)= \frac{n^3 - 3n^2 + 2n}{2}x + \frac{n^4 - 6n^3 +11n^2 -6n}{8}x^2.$$
\end{proposition}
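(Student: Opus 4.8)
The plan is to exploit the identity $H_e(K_n,x) = H(L(K_n),x)$ recorded above and to count pairs of distinct edges of $K_n$ according to their distance in the line graph $L(K_n)$. The key structural observation is that, because $K_n$ is simple, two distinct edges share at most one vertex, so any two distinct edges are either \emph{incident} (they share exactly one vertex) or \emph{disjoint}. I would first argue that incident edges lie at distance $1$ in $L(K_n)$, since they correspond to adjacent vertices of the line graph. Next, for $n \geq 4$, any two disjoint edges $\{a,b\}$ and $\{c,d\}$ lie at distance exactly $2$: they are nonadjacent in $L(K_n)$, yet the edge $\{b,c\}$ of $K_n$ is adjacent to both, giving a path of length $2$. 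Consequently $d^e_k(K_n)=0$ for all $k \geq 3$, so only the coefficients of $x$ and $x^2$ survive.

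Next I would compute $d_1^e(K_n)$, the number of incident pairs, by grouping such pairs according to their common vertex. Each vertex $v$ has degree $n-1$ and thus contributes $\binom{n-1}{2}$ pairs of incident edges; because distinct edges share at most one vertex, no pair is counted twice across different vertices. Summing over all $n$ vertices gives $d_1^e(K_n) = n\binom{n-1}{2} = \frac{n(n-1)(n-2)}{2}$, which expands to $\frac{n^3-3n^2+2n}{2}$, the desired coefficient of $x$.

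For the distance-$2$ term I would count unordered pairs of disjoint edges. Choosing four distinct vertices in $\binom{n}{4}$ ways and then splitting them into two disjoint edges in $3$ ways yields $d_2^e(K_n) = 3\binom{n}{4} = \frac{n(n-1)(n-2)(n-3)}{8}$; a short expansion gives $\frac{n^4-6n^3+11n^2-6n}{8}$, the claimed coefficient of $x^2$. Collecting these two contributions completes the formula.

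The argument is essentially routine counting, so I anticipate no serious obstacle; the only points requiring care are the justification that disjoint edges are at distance exactly $2$ (and never more) and the bookkeeping that guarantees each incident pair is counted once. A useful final check is that the formula degenerates correctly for small $n$: for $n \leq 2$ both coefficients vanish (no pair of edges exists), for $n=3$ the coefficient of $x^2$ vanishes (a triangle has no disjoint edges), and for $n \geq 4$ both terms are positive and match the direct counts.
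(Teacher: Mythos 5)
Your proposal is correct and follows essentially the same route as the paper: both argue that incident edges are at distance $1$, that for $n\geq 4$ any two disjoint edges are at distance exactly $2$ via a connecting edge, and then count the two classes. The only cosmetic difference is in the bookkeeping—the paper counts distance-$1$ pairs via the $2(n-2)$ edges adjacent to each edge and obtains the distance-$2$ count by subtracting from $\binom{\binom{n}{2}}{2}$, whereas you count distance-$1$ pairs by common vertex as $n\binom{n-1}{2}$ and the distance-$2$ pairs directly as $3\binom{n}{4}$; both yield the same coefficients.
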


\begin{proof}
It can be easily checked that the formula holds for $n=2$ and $n=3$. Therefore, let $n \geq 4$. If two distinct edges $e=ab$ and $f=uv$ of $K_n$ are not adjacent, then $d(e,f)=2$ since for example $bu$ is an edge in $K_n$. Consequently, the diameter of $L(K_n)$ equals two.

Moreover, every edge of $K_n$ is adjacent to exactly $2(n-2)$ other edges, and therefore, the number of pairs of edges at distance one is
$$ \frac{{n \choose 2} 2(n-2)}{2} = \frac{n^3 - 3n^2 + 2n}{2}.$$
Hence, the number of pairs of edges at distance two is
$${ {n \choose 2} \choose 2} -  \frac{n^3 - 3n^2 + 2n}{2} = \frac{n^4 - 6n^3 +11n^2 -6n}{8},$$
which completes the proof.
 \end{proof}

\begin{proposition} \label{prop3}
For the cycle $C_n$ with $n$ vertices, $n \geq 3$, it holds
\begin{eqnarray*}
H(C_n,x) =H_e(C_n,x) & = & \begin{cases} nx \left(1 + x + \cdots + x^{\frac{n}{2}-2} + \frac{1}{2} x^{\frac{n}{2}-1} \right); & n \textrm{ even} \\
nx\left( 1 + x + \cdots +  x^{\frac{n-3}{2}}\right) ; & n \textrm{ odd}
\end{cases}, \\
Sc(C_n,x) = Gut(C_n,x) &  = & \begin{cases} 4nx \left(1 + x + \cdots + x^{\frac{n}{2}-2} + \frac{1}{2} x^{\frac{n}{2}-1} \right); & n \textrm{ even} \\
4nx\left( 1 + x + \cdots +  x^{\frac{n-3}{2}}\right) ; & n \textrm{ odd}
\end{cases}.
\end{eqnarray*}

\end{proposition}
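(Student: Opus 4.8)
The plan is to compute the Hosoya polynomial directly by a distance count, and then to derive the remaining three identities from it using two structural observations about the cycle: that its line graph is again the same cycle, and that it is $2$-regular.

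First I would fix a cyclic labelling of the vertices of $C_n$ and use the fact that the distance between two vertices equals the length of the shorter of the two arcs joining them. From any fixed vertex, for each $k$ with $1 \leq k \leq \lfloor (n-1)/2 \rfloor$ there are exactly two vertices at distance $k$ (one in each direction around the cycle), while when $n$ is even and $k = n/2$ there is exactly one such vertex, namely the antipodal one. Summing over all $n$ vertices and dividing by $2$ to correct for the double-counting of unordered pairs gives $d_k(C_n) = n$ for $1 \leq k \leq n/2 - 1$ together with $d_{n/2}(C_n) = n/2$ in the even case, and $d_k(C_n) = n$ for $1 \leq k \leq (n-1)/2$ in the odd case. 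Substituting these counts into $H(C_n,x) = \sum_{k \geq 1} d_k(C_n) x^k$ and extracting the common factor $nx$ yields the stated closed form, where the antipodal contribution in the even case becomes the half-coefficient $\tfrac{1}{2} x^{n/2-1}$.

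For the edge-Hosoya polynomial I would invoke the identity $H_e(G,x) = H(L(G),x)$ recorded earlier, together with the elementary fact that the line graph of a cycle is the same cycle, $L(C_n) \cong C_n$; this immediately gives $H_e(C_n,x) = H(C_n,x)$. For the Schultz and Gutman polynomials the decisive simplification is that $C_n$ is $2$-regular, so $\deg(a) + \deg(b) = 4$ and $\deg(a)\deg(b) = 4$ for every unordered pair $\{a,b\}$. Consequently $s_k(C_n) = g_k(C_n) = 4\, d_k(C_n)$ for all $k$, whence $Sc(C_n,x) = Gut(C_n,x) = 4\, H(C_n,x)$, which accounts for the factor $4n$ replacing $n$.

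I do not expect a genuine obstacle, since the whole argument reduces to the routine distance enumeration above; the only point demanding care is the even case, where the antipodal vertex is fixed by the clockwise/counterclockwise pairing and must therefore be counted with weight one rather than two, producing exactly the half-coefficient in the closed form.
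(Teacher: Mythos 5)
Your proof is correct and complete: the distance count $d_k(C_n)=n$ for $1\leq k\leq \lfloor (n-1)/2\rfloor$ with the extra antipodal term $d_{n/2}(C_n)=n/2$ in the even case, together with $L(C_n)\cong C_n$ and $2$-regularity, yields all four formulas. The paper itself gives no proof of this proposition, treating it as one of the straightforward results already available in the literature, and your argument is exactly the standard one that would be supplied.
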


\begin{proposition} \label{prop4}
For the star $S_n$ with $n +1$ vertices, $n \geq 1$, it holds
\begin{eqnarray*}
H(S_n,x) & = & nx + \frac{n(n-1)}{2}x^2, \\
Sc(S_n,x)&  = & n(n+1)x + n(n-1)x^2,\\
Gut(S_n,x) &=& n^2x + \frac{n(n-1)}{2}x^2,\\
H_e(S_n,x) &=&  \frac{n(n-1)}{2}x.
\end{eqnarray*}
\end{proposition}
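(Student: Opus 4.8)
The plan is to evaluate each polynomial by directly partitioning the relevant pairs according to their mutual distance. Recall that the star $S_n$ consists of one central vertex $c$ with $\deg(c)=n$ together with $n$ leaves $v_1,\dots,v_n$, each of degree $1$. Only two distances occur among vertices: $d(c,v_i)=1$ for every leaf, and $d(v_i,v_j)=2$ for every pair of distinct leaves, the unique shortest path between two leaves passing through $c$. Hence there are exactly $n$ vertex pairs at distance $1$ (the center--leaf pairs) and exactly $\binom{n}{2}=\tfrac{n(n-1)}{2}$ vertex pairs at distance $2$ (the leaf--leaf pairs).

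First I would treat the three vertex-based polynomials together, since they differ only in the weight assigned to each pair. For $H(S_n,x)$ each pair contributes $1$, so collecting the two distance classes gives $nx+\tfrac{n(n-1)}{2}x^2$ at once. For $Sc(S_n,x)$ a center--leaf pair contributes $\deg(c)+\deg(v_i)=n+1$ and a leaf--leaf pair contributes $1+1=2$; summing over the two classes yields $n(n+1)x+n(n-1)x^2$. For $Gut(S_n,x)$ the weights become $\deg(c)\deg(v_i)=n$ and $\deg(v_i)\deg(v_j)=1$, so the coefficients are $n\cdot n=n^2$ and $\tfrac{n(n-1)}{2}$, producing $n^2x+\tfrac{n(n-1)}{2}x^2$.

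For the edge-Hosoya polynomial I would use the identity $H_e(G,x)=H(L(G),x)$ recorded in the preliminaries. The crucial observation is that every edge of $S_n$ is incident with the center $c$, so any two distinct edges share $c$ and are therefore adjacent in the line graph. Consequently $L(S_n)=K_n$ (on the $n$ edges of $S_n$), and Proposition~\ref{prop1} immediately gives $H_e(S_n,x)=H(K_n,x)=\tfrac{n(n-1)}{2}x$.

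I do not expect a genuine obstacle: the entire argument rests on identifying the two distance classes among vertex pairs and recognizing that the line graph of a star is complete. The only delicate point is the boundary case $n=1$, where there are no leaf--leaf pairs and no pair of edges; one should check that the formulas remain valid, which they do since every coefficient that would require such pairs vanishes.
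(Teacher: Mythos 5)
Your proof is correct and is essentially the argument the paper has in mind: the paper states Proposition~\ref{prop4} without proof, explicitly treating it as one of the straightforward cases, and the intended reasoning is exactly your partition of vertex pairs into the $n$ center--leaf pairs at distance $1$ and the $\binom{n}{2}$ leaf--leaf pairs at distance $2$, together with $L(S_n)=K_n$ and the identity $H_e(G,x)=H(L(G),x)$ from the preliminaries. Your check of the boundary case $n=1$ is a sensible addition and everything goes through.
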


\begin{proposition} \label{prop5}
For the wheel $W_n$ with $n +1$ vertices, $n \geq 3$, it holds
\begin{eqnarray*}
H(W_n,x) & = & 2nx + \frac{n(n-3)}{2}x^2, \\
Sc(W_n,x)&  = & n(n+9)x + 3n(n-3)x^2,\\
Gut(W_n,x) &=& 3n(n+3) x + \frac{9n(n-3)}{2}x^2,\\
H_e(W_n,x) &=&  \frac{n(n+5)}{2}x + n(n-1)x^2 + \frac{n(n-5)}{2}x^3.
\end{eqnarray*}
\end{proposition}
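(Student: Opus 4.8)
The plan is to exploit the very simple distance structure of $W_n$. Label the hub $c$ and the rim vertices $v_0,\dots,v_{n-1}$ cyclically, so that $\deg(c)=n$ and $\deg(v_i)=3$ for every $i$. Since $c$ is adjacent to all rim vertices, the diameter of $W_n$ is $2$: a pair of vertices is at distance $1$ exactly when it is an edge, and at distance $2$ otherwise. The edges split into the $n$ spokes $cv_i$ and the $n$ rim edges $v_iv_{i+1}$, so $d_1(W_n)=2n$; the distance-$2$ pairs are precisely the non-adjacent rim pairs, of which there are $\binom{n}{2}-n=\frac{n(n-3)}{2}$. Substituting these into $H(W_n,x)=\sum_{k\ge 1} d_k(W_n)x^k$ yields the first formula at once.

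For the Schultz and Gutman polynomials I would evaluate $s_k$ and $g_k$ on the same partition. Among the distance-$1$ pairs, each of the $n$ spoke pairs contributes $\deg(c)+\deg(v_i)=n+3$ (resp.\ $\deg(c)\deg(v_i)=3n$), while each of the $n$ adjacent rim pairs contributes $3+3=6$ (resp.\ $9$), giving $s_1=n(n+3)+6n=n(n+9)$ and $g_1=3n^2+9n=3n(n+3)$. Every distance-$2$ pair is a rim pair with both endpoints of degree $3$, so $s_2=6\,d_2(W_n)=3n(n-3)$ and $g_2=9\,d_2(W_n)=\frac{9n(n-3)}{2}$. Reading off $Sc(W_n,x)=\sum_{k\ge 1}s_k x^k$ and $Gut(W_n,x)=\sum_{k\ge 1}g_k x^k$ produces the two middle formulas.

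The real work is the edge-Hosoya polynomial, for which I would use $H_e(W_n,x)=H(L(W_n),x)$ and analyse distances in the line graph. Writing $s_i$ for the spoke $cv_i$ and $r_i$ for the rim edge $v_iv_{i+1}$ (indices mod $n$), two edges are adjacent in $L(W_n)$ iff they share a vertex; hence the $n$ spokes form a clique (they all meet at $c$), the $n$ rim edges form a cycle, and $s_i$ is joined exactly to $r_{i-1}$ and $r_i$. I would then classify the three types of pairs: any two spokes are at distance $1$; a spoke--rim pair $s_i,r_j$ is at distance $1$ when $j\in\{i-1,i\}$ and at distance $2$ otherwise, since the spoke-clique lets one reach any rim edge through an incident spoke in two steps; and a rim pair $r_i,r_j$ is at distance $1$, $2$, or $3$ according as its cyclic distance on the rim is $1$, $2$, or at least $3$ --- the clique of spokes caps the distance at $3$ via $r_i\to s_i\to s_j\to r_j$, while cyclically adjacent rim edges share a vertex and cyclic-distance-$2$ rim edges share the rim edge lying between them.

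It then remains to count each class. The distance-$1$ edge pairs are the $\binom{n}{2}$ spoke pairs, the $2n$ adjacent spoke--rim pairs, and the $n$ adjacent rim pairs, totalling $d_1^e=\frac{n(n+5)}{2}$; the distance-$2$ pairs are the $n(n-2)$ remaining spoke--rim pairs together with the $n$ cyclic-distance-$2$ rim pairs, totalling $d_2^e=n(n-1)$; and the distance-$3$ pairs are the $\frac{n(n-1)}{2}-2n=\frac{n(n-5)}{2}$ rim pairs of cyclic distance at least $3$. Substituting $d_1^e,d_2^e,d_3^e$ into $H_e(W_n,x)=\sum_{k\ge 1}d_k^e(W_n)x^k$ gives the claimed formula. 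I expect the counting of rim pairs to be the main obstacle: the naive tally ``$n$ pairs at each cyclic distance'' fails once the rim cycle is short, so the distance-$2$ and distance-$3$ counts --- and hence the stated formula --- genuinely require $n\ge 5$, whereas for $n=3,4$ the line graph has diameter $2$ and the rim pairs must be recounted. This wrap-around of the rim cycle is the one place where careful case analysis is needed.
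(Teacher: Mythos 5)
Your proof is correct, and since the paper states Proposition \ref{prop5} without proof, the direct counting argument you give (diameter-$2$ structure of $W_n$ for the vertex-based polynomials, and the explicit description of $L(W_n)$ as a spoke-clique $K_n$ plus a rim-cycle $C_n$ with each spoke joined to two consecutive rim edges) is exactly the intended route. All of your counts check out: $d_1^e=\binom{n}{2}+2n+n=\frac{n(n+5)}{2}$, $d_2^e=n(n-2)+n=n(n-1)$, $d_3^e=\frac{n(n-1)}{2}-2n=\frac{n(n-5)}{2}$, and these sum to $\binom{2n}{2}$ as they must. Your closing caveat is not a defect of your proof but a genuine flaw in the proposition as stated: for $n=4$ the claimed $H_e$ formula has a negative coefficient ($18x+12x^2-2x^3$ versus the true $18x+10x^2$), and for $n=3$ it gives $12x+6x^2-3x^3$ whereas $W_3=K_4$ and Proposition \ref{prop2} correctly yields $12x+3x^2$; so the hypothesis for the edge-Hosoya formula should read $n\geq 5$, exactly as you observe, because the rim-cycle wrap-around collapses the cyclic-distance-$2$ count and eliminates distance-$3$ pairs when $n\leq 4$.
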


\begin{proposition} \label{prop6}
For the path $P_n$ with $n$ vertices, $n \geq 3$, it holds
\begin{eqnarray*}
H(P_n,x) & = &  \sum_{i=1}^{n-1} (n-i)x^i, \\
Sc(P_n,x) &=& \sum_{i=1}^{n-2} (4n-2-4i)x^i + 2x^{n-1}, \\
Gut(P_n,x) & = & \sum_{i=1}^{n-2} 4(n-1-i)x^i + x^{n-1}, \\
H_e(P_n,x) & = & \sum_{i=1}^{n-2} (n-1-i)x^i.
\end{eqnarray*}
\end{proposition}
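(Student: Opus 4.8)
The plan is to label the vertices of $P_n$ as $v_1, \dots, v_n$ along the path, so that $d(v_i, v_j) = |i-j|$, and then to read off each coefficient from the representations $H(G,x) = \sum_{k \geq 1} d_k(G) x^k$, $Sc(G,x) = \sum_{k \geq 1} s_k(G) x^k$, and so on. For the Hosoya polynomial this is immediate: the pairs at distance $k$ are exactly $\{v_i, v_{i+k}\}$ for $1 \le i \le n-k$, so $d_k(P_n) = n-k$ for $1 \le k \le n-1$, which is the stated formula. For the edge-Hosoya polynomial I would avoid a direct count and instead invoke the identity $H_e(G,x) = H(L(G),x)$ from the preliminaries together with the fact that the line graph of a path satisfies $L(P_n) = P_{n-1}$ (its edges $v_i v_{i+1}$ form a path under adjacency); substituting the Hosoya formula for $P_{n-1}$ then yields $H_e(P_n, x) = \sum_{i=1}^{n-2}(n-1-i)x^i$.

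The substance of the proof lies in the Schultz and Gutman polynomials, whose coefficients $s_k(P_n)$ and $g_k(P_n)$ weight each distance-$k$ pair $\{v_i, v_{i+k}\}$ by $\deg(v_i) + \deg(v_{i+k})$ and $\deg(v_i)\deg(v_{i+k})$, respectively. Here I would use that $\deg(v_1) = \deg(v_n) = 1$ while every interior vertex has degree $2$, and then classify the $n-k$ pairs at distance $k$ according to how many endpoints they meet. For $1 \le k \le n-2$ exactly two of these pairs, namely $\{v_1, v_{1+k}\}$ and $\{v_{n-k}, v_n\}$, touch the boundary (a single endpoint each), while the remaining $n-k-2$ pairs are interior. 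Hence $s_k(P_n) = 2(1+2) + (n-k-2)(2+2) = 4n-4k-2$ and $g_k(P_n) = 2(1 \cdot 2) + (n-k-2)(2 \cdot 2) = 4(n-1-k)$, which are precisely the coefficients in the sums running from $i=1$ to $n-2$ in the stated formulas.

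The one delicate point, and the reason the statement isolates the coefficient of $x^{n-1}$, is the extreme distance $k = n-1$, where the two boundary pairs $\{v_1, v_{1+k}\}$ and $\{v_{n-k}, v_n\}$ collapse into the single pair $\{v_1, v_n\}$ joining the two degree-$1$ endpoints. Here the classification above no longer applies and must be replaced by a direct computation: $s_{n-1}(P_n) = 1 + 1 = 2$ and $g_{n-1}(P_n) = 1 \cdot 1 = 1$. The Schultz value $2$ happens to agree with the substitution $k = n-1$ into $4n-4k-2$, so it could be absorbed into the general sum, whereas the Gutman value $1$ differs from $4(n-1-k)\big|_{k=n-1} = 0$; this discrepancy is exactly why the term $x^{n-1}$ is displayed separately in the Gutman formula. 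Assembling $H(P_n,x) = \sum_k d_k(P_n)x^k$, and likewise for $Sc$, $Gut$, and $H_e$, then gives all four identities. I anticipate no obstacle beyond this careful treatment of the endpoints at maximal distance, since every sum involved is elementary.
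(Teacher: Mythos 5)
Your proof is correct. The paper states Proposition~\ref{prop6} without proof (it is flagged in Section~\ref{sec3} as routine and partly appearing in earlier work such as \cite{behm}), so there is no argument to compare against; your direct count --- reading off $d_k$, $s_k$, $g_k$ from the labelling $d(v_i,v_j)=|i-j|$, using $L(P_n)\cong P_{n-1}$ for the edge-Hosoya case, and treating the single pair $\{v_1,v_n\}$ at distance $n-1$ separately --- is exactly the natural verification, and your observation that the coefficient $2$ of $x^{n-1}$ in $Sc(P_n,x)$ coincidentally matches $4n-2-4i$ at $i=n-1$ while the Gutman coefficient $1$ does not explains why only the latter genuinely requires the isolated term.
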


\section{Roots of modified degree-distance-based polynomials}
\label{gl_se}

In this section, we investigate some newly defined polynomials. In particular, we introduce the following modified polynomials of a connected graph $G$:
\begin{eqnarray*}
H^*(G,x) & = & 1- H(G,x), \\
Sc^*(G,x) & = & 1- Sc(G,x), \\
Gut^*(G,x) & = & 1- Gut(G,x), \\
H_e^*(G,x)&  = & 1- H_e(G,x).
\end{eqnarray*}

The motivation for studying these polynomials comes from a result used in  \cite{dehmer}, see also \cite{BDTZ23}.
\begin{lemma} \cite{BDTZ23,dehmer}  \label{main_le}
Let $Q(x)= q_1x + q_2 x^2 + \cdots + q_n x^n$ be a polynomial, $n \in \mathbb{N}$, $q_i \in [0, \infty)$ for all $i \in \{ 1, \ldots, n \}$, and $q_1 + q_2 + \cdots + q_n \geq 1$. Then the polynomial $Q^*(x) = 1- Q(x)$ has a unique positive root $\delta$ such that $\delta \in (0,1]$. { Moreover, $\delta = 1$ if and only if $q_1 + q_2 + \cdots + q_n = 1$.}
\end{lemma}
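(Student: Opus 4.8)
The plan is to combine the Intermediate Value Theorem for existence with a strict-monotonicity argument for uniqueness. First I would record the two boundary evaluations. Since every coefficient is nonnegative and $Q$ has no constant term, $Q(0) = 0$, hence $Q^*(0) = 1 > 0$. At the other endpoint, $Q(1) = q_1 + q_2 + \cdots + q_n \geq 1$ by hypothesis, so $Q^*(1) = 1 - Q(1) \leq 0$. Because $Q^*$ is a polynomial and therefore continuous on $[0,1]$, the Intermediate Value Theorem guarantees at least one root $\delta \in (0,1]$; the root cannot be $0$ since $Q^*(0) = 1 \neq 0$.

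For uniqueness I would show that $Q^*$ is strictly decreasing on $(0, \infty)$. Its derivative is $(Q^*)'(x) = -Q'(x) = -(q_1 + 2 q_2 x + \cdots + n q_n x^{n-1})$. The hypothesis $q_1 + \cdots + q_n \geq 1 > 0$ forces at least one coefficient $q_j$ to be strictly positive, so the summand $j q_j x^{j-1}$ is strictly positive for every $x > 0$ while all remaining summands are nonnegative. Hence $Q'(x) > 0$ and $(Q^*)'(x) < 0$ for all $x > 0$, so $Q^*$ is strictly decreasing on $(0,\infty)$. A strictly monotone function is injective and therefore attains the value $0$ at most once; combined with the existence argument, $\delta$ is the unique positive root of $Q^*$, and it lies in $(0,1]$.

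For the final equivalence I would simply read off both directions from the single evaluation $Q^*(1) = 1 - (q_1 + \cdots + q_n)$. If $q_1 + \cdots + q_n = 1$, then $Q^*(1) = 0$, so $1$ is a positive root of $Q^*$; by the uniqueness just established it must coincide with $\delta$, giving $\delta = 1$. Conversely, if $\delta = 1$, then $Q^*(1) = 0$, which yields $q_1 + \cdots + q_n = Q(1) = 1$.

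I do not foresee a genuine obstacle here; the only point that requires a little care is the justification of strict monotonicity, specifically the observation that the constraint on the coefficient sum guarantees a strictly positive contribution $j q_j x^{j-1}$ to $Q'(x)$ even in the edge case $q_1 = 0$, where $Q'(0)$ may vanish. Carrying out the monotonicity argument on the open interval $(0,\infty)$ rather than on $[0,\infty)$ sidesteps this boundary subtlety cleanly while still covering the relevant range.
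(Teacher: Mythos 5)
Your proof is correct: the Intermediate Value Theorem at the endpoints $Q^*(0)=1$ and $Q^*(1)\leq 0$, strict monotonicity of $Q^*$ on $(0,\infty)$ via the derivative (using that some $q_j>0$), and the evaluation $Q^*(1)=1-\sum_i q_i$ for the equivalence is exactly the standard argument for this lemma. The paper itself states the result without proof, citing \cite{BDTZ23,dehmer}, and your argument coincides with the one given in those sources, including the correct handling of the edge case $q_1=0$ by working on the open interval $(0,\infty)$.
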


Obviously, if a connected graph has at least two vertices, then  the polynomials $H$, $Sc$, and $Gut$ can be written as $q_1x + q_2 x^2 + \cdots + q_n x^n$, where $n \in \mathbb{N}$, $q_i \in [0, \infty)$ for all $i \in \{ 1, \ldots, n \}$, and $q_1 + q_2 + \cdots + q_n \geq 1$. However, for the edge-Hosoya polynomial $H_e$ we consider only connected graphs with at least three vertices, since otherwise $H_e(G,x)=0$.

According to Lemma \ref{main_le}, each polynomial $Q^* \in \{H^*,H_e^*,Sc^*,Gut^* \}$ has a unique positive root, denoted as $\delta(Q^*)$, which always lies within the interval $(0,1]$. This positive root serves as a topological index, referred to as a \textit{root-index} \cite{BDTZ23}. In the rest of the section, we focus on some theoretical results regarding the obtained new root-indices of a graph $G$: the \textit{Wiener root-index} $\delta(H^*(G,x))$, the \textit{edge-Wiener root-index} $\delta(H_e^*(G,x))$, the \textit{Schultz root-index} $\delta(Sc^*(G,x))$, and the \textit{Gutman root-index} $\delta(Gut^*(G,x))$.

{ The next proposition follows by Lemma \ref{main_le}.
\smallskip

\begin{proposition}
Suppose $G$ is a connected graph with at least one edge. If
$Q^* = H^*$ or $Q^* = Gut^*$, then $\delta(Q^*(G,x))=1$ if and only if $G$ is isomorphic to $P_2$. Moreover, $\delta(Sc^*(G,x)) < 1$. Furthermore, if $G$ is a connected graph with at least two edges, then $\delta(H_e^*(G,x))=1$ if and only if $G$ is isomorphic to $P_3$.
\end{proposition}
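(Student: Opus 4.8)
The plan is to apply Lemma~\ref{main_le} to each of the four root-indices separately, recalling that the lemma tells us $\delta(Q^*(G,x)) = 1$ precisely when the sum of the coefficients $q_1 + \cdots + q_n$ equals $1$, and $\delta(Q^*(G,x)) < 1$ otherwise. Thus each claim reduces to a combinatorial statement about the coefficient-sum of the relevant polynomial. First I would observe that for any of these polynomials $Q(G,x) = \sum_{k\geq 1} c_k(G)\, x^k$, evaluating at $x=1$ gives exactly the coefficient-sum $\sum_k c_k(G) = Q(G,1)$. So the whole proposition becomes a matter of identifying when $H(G,1)=1$, when $Gut(G,1)=1$, when $Sc(G,1) > 1$ always, and when $H_e(G,1)=1$.

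\smallskip

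For the Wiener case, $H(G,1) = \sum_{\{a,b\}} 1$ counts the number of unordered vertex pairs, namely $\binom{|V(G)|}{2}$; this equals $1$ iff $G$ has exactly two vertices, and since $G$ is connected with at least one edge this forces $G \cong P_2$. For the Gutman case, $Gut(G,1) = \sum_{\{a,b\}} \deg(a)\deg(b) \geq \sum_{\{a,b\}} 1 = H(G,1)$ because every degree in a connected graph with an edge is at least $1$; hence $Gut(G,1)=1$ again forces $\binom{|V(G)|}{2}=1$, i.e.\ $G\cong P_2$, and conversely one checks directly that $Gut(P_2,1)=1$. For the Schultz case, I would note that $Sc(G,1) = \sum_{\{a,b\}}(\deg(a)+\deg(b))$, and for the unique pair in $P_2$ this already gives $1+1 = 2 > 1$; for any larger connected graph the sum only grows, so $Sc(G,1) > 1$ always, which by the ``moreover'' clause of Lemma~\ref{main_le} yields $\delta(Sc^*(G,x)) < 1$. (One should state the trivial smallest case $G\cong P_2$ explicitly here, since it is the borderline instance.)

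\smallskip

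For the edge-Hosoya case I would use $H_e(G,x) = H(L(G),x)$, so $H_e(G,1) = \binom{|E(G)|}{2}$, which equals $1$ iff $G$ has exactly two edges. Among connected graphs with at least two edges, having exactly two edges forces either $P_3$ or a graph on three vertices, but connectivity plus exactly two edges pins this down to $P_3$ (the only connected graph with two edges, up to isomorphism). Conversely $H_e(P_3,1) = \binom{2}{2} = 1$. The main obstacle is mostly bookkeeping rather than depth: I must be careful about the hypothesis ranges (``at least one edge'' versus ``at least two edges'') so that the polynomials genuinely satisfy the coefficient-sum $\geq 1$ requirement of Lemma~\ref{main_le}, and I must verify that in each ``only if'' direction the combinatorial count $=1$ together with connectivity isolates a unique graph up to isomorphism. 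I would double-check the Gutman lower bound $\deg(a)\deg(b)\geq 1$ holds for \emph{every} pair, which is exactly where connectivity (no isolated vertices) is used.
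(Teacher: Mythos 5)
Your proposal is correct and follows essentially the same route as the paper: reduce each claim via Lemma~\ref{main_le} to the question of when the coefficient sum $Q(G,1)$ equals $1$, and then identify the extremal graphs ($P_2$, resp.\ $P_3$) by elementary counting. Your treatment of the Gutman case (via $Gut(G,1)\geq H(G,1)$ since all degrees are at least $1$) is a slightly more explicit version of the paper's ``analogous reasoning,'' but it is the same argument in substance.
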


\begin{proof} Obviously, $\displaystyle \sum_{k \geq 1} d_k(G)= 1$ if and only if $G$ has only one pair of vertices, which is further equivalent to $G$ being isomorphic to $P_2$. Analogous reasoning gives us the same conclusion for $\displaystyle \sum_{k\geq 1} g_k(G)$. On the other hand, since $s_1 \geq 2$, we have $\displaystyle \sum_{k\geq 1} s_k(G) > 1$, which implies $\delta(Sc^*(G,x)) < 1$. Finally, $\displaystyle \sum_{k \geq 1} d_k^e(G)= 1$ implies $d_1^e=1$, which is true only if $G$ has exactly two edges, so it should be isomorphic to $P_3$. Consequently, the stated proposition follows by Lemma \ref{main_le}.
\end{proof}}

{ In what follows, we establish lower bounds for all  investigated root-indices.
\smallskip

\begin{theorem} \label{th_do}
Let $G$ be a connected graph with at least three vertices. Then
\begin{eqnarray*}
\delta(H^*(G,x)) > \frac{1}{MH(G) + 1}, & & \delta(Sc^*(G,x)) > \frac{1}{MS(G) + 1},\\
\delta(Gut^*(G,x)) > \frac{1}{MG(G) + 1}, & & \delta(H_e^*(G,x)) > \frac{1}{MH_e(G) + 1}.
\end{eqnarray*}
\end{theorem}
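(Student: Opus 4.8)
The plan is to handle all four inequalities at once by exploiting their common shape. For $Q \in \{H, Sc, Gut, H_e\}$ write $Q(G,x)=\sum_{k=1}^{n} q_k x^k$ with nonnegative integer coefficients $q_k$ (namely $d_k$, $s_k$, $g_k$, or $d^e_k$), and let $M$ denote the associated maximum in $\{MH(G), MS(G), MG(G), MH_e(G)\}$, i.e.\ $M=\max_k q_k$. Since $G$ is connected with at least three vertices, each such $Q(G,\cdot)$ is a nonzero polynomial with nonnegative coefficients, hence strictly increasing on $(0,\infty)$, and $M\geq 1$ (at least one coefficient is a positive integer), so $\tfrac{1}{M+1}\in(0,1)$ lies in the region where $Q(G,\cdot)$ is strictly increasing. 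By Lemma~\ref{main_le} the root-index $\delta:=\delta(Q^*(G,x))$ is the unique positive root of $1-Q(G,x)$, so $Q(G,\delta)=1$.

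The key reduction I would make is that, by strict monotonicity of $Q(G,\cdot)$ on $(0,\infty)$, proving $\delta > \tfrac{1}{M+1}$ is equivalent to proving the value inequality $Q\!\left(G,\tfrac{1}{M+1}\right) < 1 = Q(G,\delta)$. I would then establish this by bounding each coefficient $q_k\leq M$ and summing the resulting geometric series:
$$Q\left(G, \frac{1}{M+1}\right) = \sum_{k=1}^{n} q_k \left(\frac{1}{M+1}\right)^k \leq M \sum_{k=1}^{n} \left(\frac{1}{M+1}\right)^k < M \sum_{k=1}^{\infty} \left(\frac{1}{M+1}\right)^k = M \cdot \frac{1}{M} = 1.$$
Finally I would specialize $Q$ and $M$ to each of the four polynomials to read off the four displayed bounds.

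The one point requiring genuine care — the main obstacle — is securing the \emph{strict} inequality, since the coefficient bound $q_k\leq M$ by itself yields only $\leq$. Strictness instead comes from passing from the finite, diameter-truncated sum to the full geometric series, whose omitted tail is strictly positive because $\tfrac{1}{M+1}>0$; I would make sure to flag that $M\geq 1$ both guarantees convergence of that series (as $\tfrac{1}{M+1}<1$) and places $\tfrac{1}{M+1}$ in the strictly increasing regime. A pleasant consequence of this route is that no separate treatment of the boundary case $\delta=1$ (e.g.\ $G\cong P_3$ for $H_e^*$) is needed, since the argument compares polynomial values rather than manipulating $\delta$ directly. Everything else is routine.
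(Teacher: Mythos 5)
Your proposal is correct and is essentially the paper's argument run in the opposite direction: both rest on the same coefficient bound $q_k \le M$ together with the geometric series $\sum_{k\ge 1} r^k = r/(1-r)$, the paper evaluating at the root $\delta$ and solving $1 < M\,\delta/(1-\delta)$ for $\delta$, while you evaluate at $1/(M+1)$ and invoke strict monotonicity of $Q(G,\cdot)$ on $(0,\infty)$. Your packaging has the minor advantage of dispensing with the paper's separate treatment of the case $\delta = 1$, which its version needs because the series $\sum_{k\ge 1}\delta^k$ diverges there.
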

\begin{proof}
The proof is analogous for all polynomials, so we consider only $\delta(H^*(G,x))$. Let $\delta = \delta(H^*(G,x))$. Obviously, the result holds trivially if $\delta =1$, so in the following suppose that $\delta < 1$. We consider
\begin{equation*}
H(G,\delta)= \sum_{k \geq 1} d_k(G) \delta^k,
\end{equation*}
where  $d_k(G)$ is the number of pairs of vertices at distance $k$.
We start with
\begin{equation*}
H(G,\delta)=\sum_{k \geq 1} d_k(G) \delta^k  < \sum_{k \geq 1} MH(G) \delta^k =  MH(G) \sum_{k = 1}^{\infty}  \delta^k = MH(G)\frac{\delta}{ 1- \delta}.
\end{equation*}
Consequently, it holds
\begin{equation} \label{eq_pr1}
\sum_{k \geq 1} d_k(G) \delta^k  <  MH(G)\frac{\delta}{ 1- \delta}.
\end{equation}
However, it is clear that
$$H^*(G,\delta) = 0,$$
so from $H^*(G,\delta) = 1 - H(G,\delta)$ we also get
\begin{equation} \label{eq_pr2}
H(G,\delta)=\sum_{k \geq 1} d_k(G) \delta^k =1.
\end{equation}
Finally,  Equations \eqref{eq_pr1} and \eqref{eq_pr2} imply
\begin{equation*}
1 < MH(G)\frac{\delta}{ 1- \delta},
\end{equation*}
which gives
   $$\delta(H^*(G,x)) > \frac{1}{MH(G) + 1}.$$
\end{proof}

For demonstration, we will compute the polynomials, root-indices, and lower bounds of a chemical graph  $G$ from Figure \ref{fig1}.

\begin{figure}[!htb]
	\centering
		\includegraphics[scale=0.7, trim=0cm 0cm 1cm 0cm]{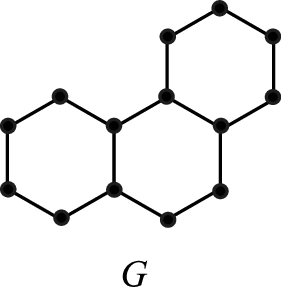}
\caption{Chemical graph $G$ of phenanthrene.}
	\label{fig1}
\end{figure}

\noindent
The corresponding  polynomials of $G$ are
\begin{eqnarray*}
H(G,x) &=& x^7 + 4x^6 + 10x^5 + 16x^4 + 22x^3 + 22x^2 + 16x, \\
H_e(G,x) &=& 4x^6 + 10x^5 + 20x^4 + 33x^3 + 31x^2 + 22x, \\
Sc(G,x) &=& 4x^7 + 16x^6 + 42x^5 + 70x^4 + 102x^3 + 106x^2 + 76x, \\
Gut(G,x) &=& 4x^7 + 16x^6 + 44x^5 + 76x^4 + 117x^3 + 126x^2 + 91x.
\end{eqnarray*}
Consequently, we obtain the root-indices of $G$:
\begin{eqnarray*}
\delta(H^*(G,x)) \doteq 0.05765, & & \delta(H_e^*(G,x)) \doteq 0.04276,\\
\delta(Sc^*(G,x)) \doteq 0.01292, & & \delta(Gut^*(G,x)) \doteq 0.01082.
\end{eqnarray*}

\noindent
However, from the polynomials follows that
$$MH(G) = 22, \quad MH_e(G)= 33, \quad MS(G)= 106, \quad MG(G)= 126.$$
Hence, by Theorem \ref{th_do} we get
\begin{eqnarray*}
\delta(H^*(G,x)) > \frac{1}{23}, & & \delta(H_e^*(G,x)) > \frac{1}{34},\\
\delta(Sc^*(G,x)) > \frac{1}{107}, & & \delta(Gut^*(G,x)) > \frac{1}{127}.
\end{eqnarray*}
We also notice that the lower bounds from Theorem \ref{th_do} give  good approximations for the  root-indices of graph $G$.

In the rest of the section, we focus on the root-indices of basic graph families.} To show their behaviour for graphs with large order, we firstly prove the following lemma.
\smallskip

\begin{lemma} \label{pomozna}
For any $n \geq 1$ let $Q_n = q_{n,1} x + q_{n,2} x^2 + \cdots + q_{n,m_n}x^{m_n}$ be a polynomial that satisfies the conditions of Lemma  \ref{main_le}. Moreover, suppose that there exists a constant $k \geq 1$ and a polynomial $p$ with degree at least one such that for every $n \geq 1$ it holds $q_{n,k}=p(n)$. If $c_n = \delta(Q_n^*)$ for all $n \geq 1$, then the sequence $(c_n)$ converges to 0.
\end{lemma}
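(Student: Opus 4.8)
The plan is to convert convergence of the root-indices into a simple coefficient estimate, using the defining equation of the root together with the non-negativity of the coefficients. By Lemma \ref{main_le}, for each $n$ the value $c_n = \delta(Q_n^*) \in (0,1]$ is the unique positive root of $Q_n^*(x) = 1 - Q_n(x)$, which is the same as saying $Q_n(c_n) = 1$. Writing this out explicitly gives
\[
q_{n,1} c_n + q_{n,2} c_n^2 + \cdots + q_{n,m_n} c_n^{m_n} = 1,
\]
and this identity is the only input I need from the hypotheses beyond Lemma \ref{main_le}.

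First I would exploit that every summand on the left-hand side is non-negative, since each $q_{n,i} \geq 0$ and $c_n > 0$. Discarding all terms except the $k$-th therefore produces the one-sided bound
\[
q_{n,k}\, c_n^k \leq \sum_{i} q_{n,i}\, c_n^i = 1.
\]
Substituting the assumption $q_{n,k} = p(n)$ and rearranging then yields $c_n^k \leq 1/p(n)$, which holds for every $n$ large enough that $p(n) > 0$. This single estimate is the crux of the argument.

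Next I would pin down the growth of $p$. Because $p$ is a polynomial of degree at least one and satisfies $p(n) = q_{n,k} \geq 0$ for all $n \geq 1$, its leading coefficient must be positive, so $p(n) \to +\infty$. Consequently $c_n^k \leq 1/p(n) \to 0$, and since $k \geq 1$ is a fixed constant and each $c_n$ is positive, $c_n^k \to 0$ forces $c_n \to 0$, which is exactly the claim.

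As for the main obstacle: there really is none of substance here, the estimate is essentially one line. The only places deserving a moment of care are the justification that $p(n) \to +\infty$ rather than $-\infty$ (which rests on the sign constraint $p(n) = q_{n,k} \geq 0$ forced by the coefficients being non-negative) and the elementary passage from $c_n^k \to 0$ to $c_n \to 0$; both are routine but should be stated. One should also note that the bound $q_{n,k} c_n^k \leq 1$ implicitly requires the degree-$k$ term to be present, i.e. $k \leq m_n$, which is automatic for all large $n$ since $q_{n,k} = p(n) > 0$ there, and only large $n$ matter for the limit.
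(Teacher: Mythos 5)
Your proposal is correct and follows essentially the same route as the paper: both extract the single inequality $p(n)\,c_n^k \leq Q_n(c_n) = 1$ from non-negativity of the coefficients and conclude $c_n \leq 1/\sqrt[k]{p(n)} \to 0$. Your added remarks on why $p(n) \to +\infty$ and on $k \leq m_n$ for large $n$ are sensible refinements of details the paper leaves implicit.
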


\begin{proof}
Obviously, for any $n \geq 1$ it holds
$$Q^*(c_n) = 1- Q_n(c_n) =0$$
and therefore
$$Q_n(c_n) = q_{n,1} c_n + q_{n,2} c_n^2 + \cdots + q_{n,m_n}c_n^{m_n} =1.$$
Since the polynomial $Q_n$ satisfies the conditions of Lemma \ref{main_le} it follows that
$$0 \leq q_{n,k}c_n^k=p(n)c_n^k \leq 1$$
for all $n \geq 1$. Consequently, there exists $n_0 \geq 1$ such that for any $n \geq n_0$ it holds
$$0 \leq c_n^k \leq \frac{1}{p(n)}  \ \implies \  0 \leq c_n \leq \frac{1}{\sqrt[k]{p(n)}}.$$
Since
$$\lim_{n \to \infty} \frac{1}{\sqrt[k]{p(n)}} =0,$$
we have proved that the sequence $(c_n)$ converges to 0.
\end{proof}

\begin{corollary}
Let $Q \in \{ H, Sc, Gut, H_e \}$ be a polynomial and for any $n \geq 3$ let $G_n \in \{K_n, C_n, S_n, W_n, P_n \}$. Then we have
$$\lim_{n \to \infty} \delta(Q^*(G_n,x)) =0.$$
\end{corollary}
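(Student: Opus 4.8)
The plan is to obtain the corollary as a direct consequence of Lemma \ref{pomozna}, applied with the uniform choice $k = 1$ to each of the pairs $(Q, G_n)$. The guiding observation is that for a connected graph the linear coefficient of each of these polynomials has a transparent combinatorial meaning and grows without bound: writing $Q(G_n, x) = q_{n,1} x + q_{n,2} x^2 + \cdots$, one has $q_{n,1} = d_1(G_n) = |E(G_n)|$ for $H$, one has $q_{n,1} = d_1^e(G_n)$ (the number of pairs of adjacent edges) for $H_e$, and one has $q_{n,1} = s_1(G_n) = \sum_{uv \in E(G_n)} (\deg u + \deg v)$ and $q_{n,1} = g_1(G_n) = \sum_{uv \in E(G_n)} \deg u \, \deg v$ for $Sc$ and $Gut$ respectively.

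First I would read off these linear coefficients from the closed forms in Propositions \ref{prop1}--\ref{prop6}. For the Hosoya polynomial, for example, $q_{n,1}$ equals $\tfrac{n(n-1)}{2}$, $n$, $n$, $2n$, and $n-1$ for $K_n$, $C_n$, $S_n$, $W_n$, and $P_n$, and an entirely analogous inspection of the remaining propositions shows that for each fixed $Q$ and each family the coefficient $q_{n,1}$ is a polynomial in $n$ of degree at least one. Hence in every case there is a polynomial $p$ with $\deg p \geq 1$ satisfying $q_{n,1} = p(n)$.

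Second, I would verify the hypotheses of Lemma \ref{pomozna}: since each $G_n$ is connected with at least three vertices (so in particular with at least two edges, which is exactly what the edge-Hosoya case requires), the polynomial $Q(G_n, x)$ satisfies the conditions of Lemma \ref{main_le}. Applying Lemma \ref{pomozna} with $k = 1$ and the polynomial $p$ found above then yields $\delta(Q^*(G_n, x)) \to 0$, which is the claim.

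There is no genuine obstacle in this argument; the only two points requiring a little care are purely bookkeeping. One must note that the selected coefficient is nonzero and actually tends to infinity, which is automatic because $k = 1$ and each $q_{n,1}$ counts a nonempty, unbounded edge-based quantity. And one must reconcile the index ranges, since Lemma \ref{pomozna} is stated for $n \geq 1$ while the families are defined only for $n \geq 3$; this is harmless, as the limit of a sequence is unchanged by deleting finitely many terms, so one may simply re-index (replacing $n$ by $n + 2$) before invoking the lemma.
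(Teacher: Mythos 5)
Your proposal is correct and follows exactly the paper's route: the paper's proof is the one-line remark that the corollary ``follows directly from Lemma \ref{pomozna} and Propositions \ref{prop1} to \ref{prop6}'', and your argument simply makes explicit the natural instantiation (take $k=1$ and read off the linear coefficient, a polynomial in $n$ of degree at least one, from the closed formulas). The bookkeeping points you raise (re-indexing and nonvanishing of the chosen coefficient) are handled correctly and do not change the substance.
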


\begin{proof}
The result follows directly from Lemma \ref{pomozna} and Propositions \ref{prop1} to \ref{prop6}.
\end{proof}

By using Proposition \ref{prop1} we can also obtain explicit formulas for  three root-indices of complete graphs.
\smallskip

\begin{proposition}
For the complete graph $K_n$ on $n$ vertices, $n \geq 2$ vertices, it follows
\begin{eqnarray*}
\delta(H^*(K_n,x)) & = & \frac{2}{n(n-1)} , \\
\delta(Sc^*(K_n,x))&  = & \frac{1}{n(n-1)^2},\\
\delta(Gut^*(K_n,x)) &=&  \frac{2}{n(n-1)^3}.
\end{eqnarray*}
\end{proposition}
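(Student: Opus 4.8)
The plan is to exploit the fact that, by Proposition~\ref{prop1}, each of the three polynomials $H(K_n,x)$, $Sc(K_n,x)$, and $Gut(K_n,x)$ is a \emph{linear} polynomial of the form $Q(K_n,x)=c\,x$, where the constant $c$ equals $\frac{n(n-1)}{2}$, $n(n-1)^2$, and $\frac{n(n-1)^3}{2}$ respectively. This is because $K_n$ has diameter one, so all nontrivial distance contributions occur at $k=1$ only. Passing to the modified polynomial then yields $Q^*(K_n,x)=1-Q(K_n,x)=1-c\,x$, whose only root is $x=\tfrac{1}{c}$.

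First I would record that for $n\ge 2$ each coefficient $c$ satisfies $c\ge 1$ (indeed $\frac{n(n-1)}{2}\ge 1$ already for $n\ge 2$, and the other two are at least as large), so each polynomial satisfies the hypotheses of Lemma~\ref{main_le} with the single coefficient $q_1=c$ and $q_1\ge 1$. Hence Lemma~\ref{main_le} guarantees a unique positive root $\delta(Q^*(K_n,x))$ lying in $(0,1]$. Since the modified polynomial is linear, its unique root is manifestly $\tfrac{1}{c}$, and because $c\ge 1$ we automatically have $0<\tfrac{1}{c}\le 1$, consistently with the lemma.

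It then remains only to substitute the three explicit values of $c$ from Proposition~\ref{prop1}. For $H^*$ we get $\delta=\tfrac{1}{n(n-1)/2}=\tfrac{2}{n(n-1)}$; for $Sc^*$ we get $\delta=\tfrac{1}{n(n-1)^2}$; and for $Gut^*$ we get $\delta=\tfrac{1}{n(n-1)^3/2}=\tfrac{2}{n(n-1)^3}$, which are exactly the claimed formulas.

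Honestly, there is no substantial obstacle here: once one observes that the relevant polynomials are linear monomials in $x$, the computation reduces to solving $1-cx=0$. The only point requiring a moment of care is verifying the coefficient condition $c\ge 1$ so that Lemma~\ref{main_le} applies and the root is guaranteed to be the unique positive one in $(0,1]$; this is immediate for all $n\ge 2$. (I would also note in passing that the edge-Hosoya case is excluded from this proposition precisely because $H_e(K_n,x)$ is quadratic rather than linear, by Proposition~\ref{prop2}, and hence does not admit such a clean closed form.)
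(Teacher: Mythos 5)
Your proof is correct and follows essentially the same route as the paper, which derives the proposition directly from Proposition~\ref{prop1} by observing that each polynomial is linear, so the modified polynomial $1-cx$ has the single root $1/c$. Your additional check that $c\ge 1$ (so the root lies in $(0,1]$ and agrees with the unique root promised by Lemma~\ref{main_le}) is a sensible bit of care that the paper leaves implicit.
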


\section{{Numerical results}}
\label{sec5}

Various numerical results related to the Wiener, Gutman, Schultz, and edge-Wiener root-indices are presented and evaluated in this section. With $N_j$ and $T_j$ we denote the families of all connected graphs and all trees on $j$ vertices, respectively.

Firstly, we consider the discrimination power of root-indices. In particular, we investigate the discrimination (also called sensitivity) which was introduced in \cite{konst}. Let $\mathcal{F}$ be a finite family of graphs that are pairwise non-isomorphic. Moreover, let $I$ be a topological index and  $\mathcal{C}$ the set of all  graphs $G$ from $\mathcal{F}$ for which there exists a graph $H \in \mathcal{F}$ such that $G$ and $H$  are not isomorphic but $I(G)=I(H)$. With other words, $\mathcal{C}$ is the set of graphs that cannot be distinguished by  topological index $I$. In addition, let  $ND = |\mathcal{C}|$. The \textit{discrimination} $Dis(I)$ of a topological index $I$ is defined as
$$Dis(I) = \frac{|\mathcal{F}| - ND}{|\mathcal{F}|}.$$

For the families \(N_j\), where \(j \in \{ 5, 6, 7, 8 \}\), and \(T_j\), where \(j \in \{ 8, \ldots, 16 \}\), we have computed $ND$ and $Dis$ for all four investigated root-indices, see Table \ref{tab_sen}.

\begin{table}[!ht]
\caption{\label{tab_sen} Discrimination for considered root-indices  in different families of graphs.}
{\small{
	\begin{tabular} {ccllllllll}
	\toprule
	&& \multicolumn{2}{c}{$\delta(H^{\ast})$} & \multicolumn{2}{c}{$\delta(Gut^{\ast})$}&\multicolumn{2}{c}{$\delta(Sc^{\ast})$}&\multicolumn{2}{c}{$\delta(H_e^{\ast})$}\\
	\midrule
	family& no.\ of graphs  & $ND$& $Dis$& $ND$& $Dis$ & $ND$& $Dis$ & $ND$& $Dis$   \\
	\midrule
  \multicolumn{10}{c}{families of  connected graphs}\\
  \midrule	
 $N_5$&  21  &   14    &  0.3333  &   0   &  1.0000   &  4    &   0.8095   &  4   & 0.8095  \\
 $N_6$& 112 &   94    & 0.1607   &   14  &  0.8750   &  51    &   0.5446   &  62 &  0.4464 \\

 $N_7$& 853 &  811   & 0.0492  &    279   &  0.6729   &  588  &  0.3107    &  694  & 0.1864  \\
 $N_8$& 11117&11014 & 0.0093  &  7010 &   0.3694  &  9823    &  0.1164 &   10523   &  0.0534 \\

  \midrule
  \multicolumn{10}{c}{families of   trees  }\\
  \midrule
   $T_{8}$& 23   & 0  & 1.0000  & 0  & 1.0000    &  0 &  1.0000     & 0   & 1.0000   \\
  $T_{9}$& 47    & 4  &  0.9149 & 4  & 0.9149  & 4  &    0.9149  & 4 & 0.9149    \\
  $T_{10}$& 106  & 4  &  0.9623 & 4  & 0.9623  & 4 &  0.9623  & 4  &   0.9623 \\
  $T_{11}$& 235  & 39 & 0.8340  & 39 & 0.8340  & 39 & 0.8340  & 39 &   0.8340\\
  $T_{12}$& 551  & 58 & 0.8947  & 58  & 0.8947  & 58 &  0.8947  & 58  & 0.8947  \\
  $T_{13}$& 1301 & 214 & 0.8355 & 214 & 0.8355 & 214 & 0.8355 & 214 & 0.8355 \\
  $T_{14}$& 3159&  498 & 0.8424 &  498& 0.8424 &  498 & 0.8424&  498& 0.8424  \\
  $T_{15}$&7741 &1609 & 0.7921 & 1609 & 0.7921  & 1611 & 0.7919 & 1609 & 0.7921  \\
  $T_{16}$& 19320 & 3873& 0.7995  & 3877 & 0.7993  & 3889 & 0.7987 & 3873 &  0.7995 \\
  \bottomrule
\end{tabular}
}}
\end{table}

We can conclude that the Gutman root-index has the best discrimination power among all connected graphs. For example, this index can completely discriminate connected graphs on 5 vertices and it can discriminate more than 87\% of connected graphs on 6 vertices, while on the other hand the Wiener root-index can discriminate only approximately 16\% of them. As it turns out,  the worst discrimination ability in all the mentioned families of connected graphs is obtained for the Wiener root-index. However, in the families of trees all four root-indices have almost the same discrimination power, which is very high. For example, they can distinguish all trees on 8 vertices and almost 80\% of trees on 16 vertices.

To validate the introduction of new structural descriptors, we compare their performance with that of existing similar descriptors. We calculate the discrimination for all four standard versions of the corresponding indices: the Wiener index, the Gutman index, the Schultz index, and the edge-Wiener index, see Table \ref{tab_sen_stari}.

\small{\begin{table}[!ht]
	\caption{\label{tab_sen_stari} Discrimination for considered standard indices  in different families of graphs.}
\begin{tabular} {ccllllllll}
	\toprule
	&& \multicolumn{2}{c}{$W$} & \multicolumn{2}{c}{$Gut$}&\multicolumn{2}{c}{$Sc$}&\multicolumn{2}{c}{$W_e$}\\
	\midrule
	family& no.\ of graphs  & $ND$& $Dis$& $ND$& $Dis$ & $ND$& $Dis$ & $ND$& $Dis$   \\
	\midrule
  \multicolumn{10}{c}{families of  connected graphs}\\
  \midrule	
 $N_5$&  21  &   16   &  0.2381 &   2   &  0.9048  &  8  &   0.6190   &  7   & 0.6667  \\
  $N_6$& 112 &   108   & 0.0357   &   41  &  0.6339   &  96    &   0.1429 &  91 &  0.1875 \\
 $N_7$& 853 &  847   & 0.0070  &    715  &  0.1618  &  826  &  0.0317   &  829  & 0.0281  \\
 $N_8$& 11117& 11110 & 0.0006  &  10998 &   0.0107  &  11099    &  0.0016 &   11085   &  0.0029 \\
  \midrule
  \multicolumn{10}{c}{families of trees }\\
  \midrule
   $T_{8}$& 23   & 6  & 0.7391  & 6  & 0.7391    &  6 &  0.7391 & 19 & 0.7391   \\
  $T_{9}$& 47    & 39 &  0.1702 & 39  & 0.1702 & 39  &    0.1702   & 39 &  0.1702   \\
  $T_{10}$& 106& 83 &  0.2170 & 83 &0.2170 & 83 &  0.2170 & 83 &   0.2170 \\
  $T_{11}$&235 &221 & 0.0596 & 221  & 0.0596  & 221 & 0.0596  & 221 &  0.0596\\
  $T_{12}$& 551  & 528 &0.0417  &528  &0.0417  & 528 & 0.0417 &528  &0.0417  \\
  $T_{13}$& 1301 & 1286 & 0.0115 & 1286  & 0.0115  &1286  & 0.0115  &  1286& 0.0115  \\
  $T_{14}$& 3159&  3131 & 0.0089 & 3131 &0.0089  & 3131  & 0.0089 & 3131 &0.0089 \\
  $T_{15}$& 7741 & 7724& 0.0022 & 7724 & 0.0022 & 7724  & 0.0022  & 7724 &0.0022  \\
  $T_{16}$& 19320 & 19289& 0.0016  &19289  & 0.0016 & 19289 & 0.0016 & 19289 &  0.0016\\
  \bottomrule
\end{tabular}
\end{table}}

The results demonstrate that the considered root-indices outperform the standard versions of indices in terms of discrimination power, with this difference being particularly noticeable in families of trees. For instance, the Gutman root-index can discriminate more than 67\% of all connected graphs with 7 vertices, whereas the Gutman index can distinguish less than 17\% of these graphs. Remarkably, the mentioned root-index can distinguish almost 80\% of trees with 16 vertices, whereas the corresponding index demonstrates a significantly lower discrimination rate of less than 0.2\%.

We further consider  correlations between different pairs of root-indices. Two sets of samples are examined: all 11117 connected graphs on 8 vertices and the complete collection of 7741 trees on 15 vertices. The  Pearson correlation coefficients are collected in Table \ref{tab_cor-poly0}. It can be observed that the correlations are very strong, since the Pearson correlation coefficient in all cases is greater than 0.93.

\begin{table}[!ht]
\caption{\label{tab_cor-poly0} Correlation coefficients between root-indices in graph families $T_{15}$ and $N_8$.}
	\begin{tabular} {cccccc}\toprule
	graph family& root-index& $\delta(H^{\ast})$  &$\delta(Gut^{\ast})$  & $\delta(Sc^{\ast})$ & $\delta(H_e^{\ast})$\\ \midrule
\multirow{3}{*}{$T_{15}$}&	$\delta(H^{\ast})$   & \multirow{3}{*}{} & 0.9324  &   0.9872 &   0.9681  \\ \cmidrule{2-6}
&	$\delta(Gut^{\ast})$    &  \multicolumn{2}{c}{}  & 0.9479 &  0.9617\\   \cmidrule{2-6}
&	$\delta(Sc^{\ast})$   & \multicolumn{3}{c}{}   & 0.9945\\  \midrule
\multirow{3}{*}{$N_{8}$} &	$\delta(H^{\ast})$  & \multirow{3}{*}{} &  0.9440  &   0.9784  &   0.9591 \\  \cmidrule{2-6}
&	$\delta(Gut^{\ast})$   & \multicolumn{2}{c}{} & 0.9873 &  0.9953\\  \cmidrule{2-6}
&	$\delta(Sc^{\ast})$   & \multicolumn{3}{c}{}  & 0.9959\\  \bottomrule
	\end{tabular}
\end{table}

We also compared the corresponding standard topological indices and the correlation coefficients are gathered in Table \ref{tab_cor-poly1}. We notice that for the family $T_{15}$, there is almost linear relation between any pair of topological indices. On the other hand, there are slightly weaker correlations between them in the family $N_8$. It is also interesting that in $N_8$, the Wiener index negatively correlates with other standard topological indices.

\begin{table}[!ht]
\caption{\label{tab_cor-poly1} Correlation coefficients between standard  indices in graph families $T_{15}$ and $N_8$.}
	\begin{tabular} {cccccc}\toprule
	graph family& index& $W$  &$Gut$  & $Sc$ & $W_e$\\       \midrule
\multirow{3}{*}{$T_{15}$}&	$W$   & \multirow{3}{*}{} & 1.0000  &   1.0000 &   1.0000 \\ \cmidrule{2-6}
&	$Gut$    &  \multicolumn{2}{c}{}  & 1.0000 &  1.0000\\   \cmidrule{2-6}
&	$Sc$   & \multicolumn{3}{c}{}   & 1.0000\\ \midrule
\multirow{3}{*}{$N_{8}$} &	$W$  & \multirow{3}{*}{} &  -0.8598  &   -0.7646  &   -0.8518 \\   \cmidrule{2-6}
&	$Gut$   & \multicolumn{2}{c}{} & 0.9614 &  0.9933\\   \cmidrule{2-6}
&	$Sc$   & \multicolumn{3}{c}{}  & 0.9422\\  \bottomrule
	\end{tabular}
\end{table}

In addition, we computed correlations between original  indices and  corresponding root-indices in two families of graphs, see Table \ref{tab_cor-poly}. All the correlations are quite high, but the best one is obtained between the Wiener index and the Wiener root-index in the family of all connected graphs on 8  vertices. Moreover, it is interesting that in the same family of graphs, all other pairs of indices are  negatively correlated. 

\begin{table}[!ht]
\caption{\label{tab_cor-poly} Correlation coefficients between indices and root-indices in graph families $T_{14}$ and $N_8$.}
	\begin{tabular} {cccc}\toprule
graph class	&index & root-index & correlation coefficient \\ \midrule
\multirow{4}{*}{$T_{14}$} &$W$& 	$\delta(H^{\ast})$   &  0.8220\\  \cmidrule{2-4}
&	$Gut$& 	$\delta(Gut^{\ast})$    &  0.9418 \\  \cmidrule{2-4}
&$Sc$& 	$\delta(Sc^{\ast})$    & 0.8335  \\  \cmidrule{2-4}
 &$W_e$ & $\delta(H_e^{\ast})$  &  0.8579 \\  \midrule
 \multirow{4}{*}{$N_8$} &$W$& 	$\delta(H^{\ast})$   &  0.9579 \\  \cmidrule{2-4}
&	$Gut$& 	$\delta(Gut^{\ast})$    &   -0.7781\\  \cmidrule{2-4}
&$Sc$& 	$\delta(Sc^{\ast})$    &  -0.8248 \\  \cmidrule{2-4}
 &$W_e$ & $\delta(H_e^{\ast})$  &  -0.7996 \\  \bottomrule  
	\end{tabular} 
\end{table}

Finally, we investigate some quantities which measure how a gradual change of a graph results on the topological index. We need several additional concepts.

Let ${\cal F}$ be a family of connected graphs and let $G \in {\cal F}$. Furthermore, let \(\mathcal{S}(G)\) denote the collection of all pairwise non-isomorphic graphs that can be derived from \(G\) by the addition of precisely one edge. Consequently, the Graph Edit Distance (GED) between \(G\) and any graph within \(\mathcal{S}(G)\) is equal to one \cite{gao,sanfeliu}.

The \textit{structure sensitivity}  of a topological index $I$  for the graph $G$, $SS^1_G(I)$, is defined in the following way \cite{furtula,rakic}:
$$SS^1_G(I)=\displaystyle\frac{1}{|{\cal S}(G)|}
\sum_{G' \in {\cal S}(G)}\left|  \frac{I(G)-I(G')}{I(G)}\right|.$$

\noindent
Moreover, the \textit{abruptness} of  $I$  for the graph $G$, $Abr^1_G(I)$, is calculated as
$$Abr^1_G(I)=\max_{G' \in {\cal S}(G)}
\left|  \frac{I(G)-I(G')}{I(G)}\right|.$$


Furthermore, in \cite{rakic} the authors proposed different versions of  structure sensitivity, $SS_G^2(I)$, and  abruptness, $Abr_G^2(I)$, of a graph $G$ for a topological index $I$:
\begin{eqnarray*}
SS^2_G(I) & = & \displaystyle\sqrt{\displaystyle\frac{1}{|{\cal S}(G)|}
\sum_{G' \in {\cal S}(G)}  (I(G)-I(G'))^2},\\
Abr^2_G(I) & = & \max_{G' \in {\cal S}(G)}
\left| I(G)-I(G')\right|.
\end{eqnarray*}

The structure sensitivity and abruptness of a topological index $I$ on a family of graphs ${\cal F}$ is defined as the average over all elements of ${\cal F}$. So, for $i \in \{ 1,2 \}$, we have
\begin{eqnarray*}
SS^i(I) & = & \frac{1}{|{\cal F}|} \sum_{G \in {\cal F}} SS^i_G(I),\\
Abr^i(I) & = & \frac{1}{|{\cal F}|} \sum_{G \in {\cal F}} Abr^i_G(I).
\end{eqnarray*}

It is preferred  for a topological index to have the structure sensitivity as large as possible and the abruptness as small
as possible \cite{furtula}. If a topological index fulfils this property, on the one hand it can distinguish well between similar graphs, but on the other hand there is not much difference between the changes in the topological index. Therefore, it seems reasonable to introduce a new measure $SA^i$, $i \in \{1,2 \}$, as
$$SA^i(I) = \frac{SS^i (I)}{Abr^i(I)}.$$
The new measure enables us to compare different topological indices, since it is desired for the quotient $SA^i$, $i \in \{1,2 \}$, to be as large as possible. The results for both versions of structure sensitivity, abruptness, and their quotient of root-indices for the class of trees with $n$ vertices, $n \in \{ 9,10,11,12 \}$, are shown in Table \ref{tab_ss}.

\begin{table}[!ht]
\caption{\label{tab_ss} Structure sensitivity and abruptness of root-indices for all trees on $n$ vertices, where $n \in \{ 9,10,11,12 \}$.}
	\begin{tabular} {cccccccc}\toprule
	root-index &graph class & $SS^1$ & $Abr^1$ & $SA^1$ & $SS^2$ & $Abr^2$ & $SA^2$ \\ \midrule
\multirow{4}{*}{$\delta(H^{\ast})$} & $T_9$ &
 0.0977 & 0.1161   & 0.8417    &
 	0.0106   & 0.0125  & 0.8471   \\  \cmidrule{2-8}
	& $T_{10}$ &
0.0902	& 0.1080 & 0.8352 &
		0.0088    & 0.0104  & 0.8405   \\  \cmidrule{2-8}
& $T_{11}$&
 0.0837& 0.1010 & 0.8288 &
	0.0074    & 0.0089 & 0.8339     \\  \cmidrule{2-8}
&  $T_{12}$ &
0.0779 & 0.0943 & 0.8264 &
 0.0063  & 0.0076  & 0.8312   \\  \midrule

\multirow{4}{*}{$\delta(Gut^{\ast})$} & $T_9$&
 0.2794& 0.3853 & 0.7252 & 	
 0.0074   & 0.0099  & 0.7416   \\  \cmidrule{2-8}
	& $T_{10}$ &
0.2567	& 0.3708 & 0.6922 &
		0.0058    & 0.0082  & 0.7123   \\  \cmidrule{2-8}
& $T_{11}$&
 0.2381& 0.3591 & 0.6632 &
 	0.0048    & 0.0069 & 0.6857    \\  \cmidrule{2-8}
&  $T_{12}$ &
 0.2218 & 0.3470 & 0.6391 & 0.0040  & 0.0060  & 0.6637   \\  \midrule

\multirow{4}{*}{$\delta(Sc^{\ast})$} & $T_9$&
 0.1845 & 0.2622 & 0.7038 &
 	0.0049   & 0.0068  & 0.7287   \\  \cmidrule{2-8}
	& $T_{10}$
& 0.1696	& 0.2480 & 0.6839
	& 	0.0040    & 0.0056  & 0.7103   \\  \cmidrule{2-8}
& $T_{11}$&
 0.1569& 0.2362 & 0.6646 &
 	0.0033    & 0.0048 & 0.6917    \\  \cmidrule{2-8}
&  $T_{12}$ &
0.1459& 0.2244 & 0.6501 &
 0.0028  & 0.0040  & 0.6776   \\  \midrule

\multirow{4}{*}{$\delta(H_e^{\ast})$} & $T_9$&
 0.2288 & 0.3373 & 0.6784 &
 	0.0212   & 0.0298  & 0.7120   \\  \cmidrule{2-8}
	& $T_{10}$ &
 0.2111	& 0.3218 & 0.6560&
		0.0170    & 0.0246  & 0.6915   \\  \cmidrule{2-8}
& $T_{11}$&
 0.1960 & 0.3088 & 0.6349 &
 	0.0140    & 0.0208 & 0.6711    \\  \cmidrule{2-8}
&  $T_{12}$ &
0.1828 & 0.2954 & 0.6187 &
 0.0117  & 0.0178 & 0.6554  \\  \bottomrule
	\end{tabular}
\end{table}

The results show that the best index regarding $SA^1$ and $SA^2$ is the Wiener root-index. For example, for trees on 11 vertices we get $SA^1(\delta(H^*)) = 0.8288$, $SA^1(\delta(Gut^*)) = 0.6632$, $SA^1(\delta(Sc^*)) = 0.6646$, $SA^1(\delta(H_e^*)) = 0.6349$ and similarly $SA^2(\delta(H^*)) = 0.8339$, $SA^2(\delta(Gut^*)) = 0.6857$, $SA^2(\delta(Sc^*)) = 0.6917$, $SA^2(\delta(H_e^*)) = 0.6711$. We also observe that on trees both measures, $SA^1$ and $SA^2$, behave very similarly for all four indices. Therefore, it seems that  if we want to evaluate the performance of different topological indices on trees, it does not matter with which version we work.

\section{Summary and conclusion}

 We introduced and examined several novel root-indices of graphs, which can be used as quantitative graph measures. Initially, we presented analytical findings related to the roots of  new modified graph polynomials. More precisely, several closed formulas and bounds were determined.

 In the final section, we performed quantitative analysis and compared the results to original topological descriptors. The results indicate that the new descriptors have significantly enhanced discrimination power. Among connected graphs, the Gutman root-index shows the highest discrimination power. However, for tree families, all four root-indices exhibit almost identical, high discrimination power. Moreover, it can  be observed that the correlations between different root-indices are very strong, in all cases greater than 0.93. Additionally, the correlations between root-indices and their original versions are also quite high, which means that the root-indices capture information similarly as other standard (existing) graph measures.
On the other hand, it seems better to use root-indices since they have much higher discrimination power. 

Regarding structure sensitivity and abruptness, which quantify how a gradual change of a graph results on the topological index, we introduced their quotient as a new measure in order to simplify the comparison of different topological indices. We found out that the best index regarding the mentioned new measure is the Wiener root-index.

{As a future work, we would like to improve the stated bounds by finding novel estimations.
We are convinced that these bounds can be generalized such that the presented bounds will result as special cases.
Also, we want to prove more upper and lower bounds for these measures by considering similar or other graph polynomials and compare
the results with the ones we have stated in this paper. Another idea is to prove so-called implicit bounds \cite{dehmer_mowshowitz_bounds_2011}. } Additionally, we would like to investigate chemical applications of various root-indices.




\section*{Funding information} 

\noindent Simon Brezovnik, Niko Tratnik, and Petra \v Zigert Pleter\v sek acknowledge the financial support from the Slovenian Research and Innovation Agency: research programme No.\ P1-0297 (Simon Brezovnik, Niko Tratnik, Petra \v Zigert Pleter\v sek), projects No.\ J1-4031 (Simon Brezovnik), N1-0285 (Niko Tratnik), and L7-4494 (Petra \v Zigert Pleter\v sek).

\section*{Conflict of Interest Statement}

Not Applicable. The author declares that there is no conflict of interest.



\end{document}